\newtheorem{theorem}{Theorem}[section]
\newtheorem{corollary}[theorem]{Corollary}
\newtheorem{definition}{Definition}
\newtheorem{lemma}[theorem]{Lemma}
\newtheorem{proposition}[theorem]{Proposition}
\theoremstyle{remark}
\newtheorem{remark}[theorem]{Remark}
\numberwithin{equation}{section}
\newcommand{\pfrak}{\mathfrak{p}}
\newcommand{\afrak}{\mathfrak{a}}
\newcommand{\F}{\mathbb{F}}
\newcommand{\N}{\mathbb{N}}
\newcommand{\G}{\mathbb{G}}
\newcommand{\ord}{\mathrm{ord}}
\newcommand{\End}{\mathrm{End}}
\newcommand{\Nr}{\mathrm{Nr}}
\newcommand{\Zcali}{\mathcal{Z}}
\newcommand{\hhat}{\hat{h}}
\newcommand{\phitx}{\phi_T(x)}
\newcommand{\phitnx}{\phi_{T^n}(x)}
\newcommand{\logmax}{\mathrm{log}_q \ \mathrm{max}}
\newcommand{\phitor}{\phi_{\mathrm{Tor}}}
\newcommand{\ds}{\displaystyle}
\newcommand{\Afrak}{\mathfrak{A}}
\newcommand{\Bfrak}{\mathfrak{B}}
\newcommand{\Fqt}{\F_q[T]}
\newcommand{\FFqt}{\F_q(T)}
  \DeclareFontFamily{U}{wncy}{}
    \DeclareFontShape{U}{wncy}{m}{n}{<->wncyr10}{}
    \DeclareSymbolFont{mcy}{U}{wncy}{m}{n}
    \DeclareMathSymbol{\Sha}{\mathord}{mcy}{"58}
\begin{document}
\title{Explicit Zsigmondy bounds for families of Drinfeld modules of rank 2}

\author{Mat\'ias Alvarado}
\address{ Departamento de Matem\'aticas,
Pontificia Universidad Cat\'olica de Chile.
Facultad de Matem\'aticas,
4860 Av.\ Vicu\~na Mackenna,
Macul, RM, Chile}
\email[M. Alvarado]{mnalvarado1@uc.cl }%

\thanks{Supported by ANID Doctorado Nacional 21200910}

\date{\today}
\subjclass[2010]{Primary 11G09; Secondary 11S82, 37P15, } %
\keywords{Drinfeld modules, Zsigmondy bound, height}

\begin{abstract} We give explicit bounds for Zsigmondy sets of certain families of Drinfeld modules of rank 2. The primary strategy is to bound the local heights associated to Drinfeld modules and then relate canonical to classical heights.
\end{abstract}


\maketitle
\setcounter{tocdepth}{1}

\section{Introduction}

Let $p$ be a prime and $q$ be a power of $p$. Given a Drinfeld module $\phi$ over $\FFqt$ and an element $x\in \FFqt$, we study arithmetic aspects of the orbit of $x$ via the rational function $\phi_T.$ The element $\phitnx$ is written as $\phitnx=\Afrak_n \Bfrak_n^{-1}$, with $\Afrak_n$ and $\Bfrak_n$ coprime polynomials. A primitive divisor of $\Afrak_n$ is a prime element $\pfrak$ in $\Fqt$ such that $\pfrak|\Afrak_n$ and $\pfrak \nmid \Afrak_i$ for every $0\leq i <n$. Then the Zsigmondy set associated to $\phi$ and $x$ is defined by
$$\Zcali(\phi,x)=\{n \geq 1 : \Afrak_n \text { does not have a primitive divisor }\}.$$
The finiteness of the Zsigmondy set attached to a Drinfeld module $\phi$ and a non-torsion point $x$ in $\FFqt$ has been proved in \cite{zhaoji} by Ji and Zhao. Unfortunately, the proof does not give a bound on the size nor the maximum of $\Zcali(\phi,x)$. In fact, it is a challenging task to find these in general. One of the main difficulties is understanding the behavior of certain local heights. 
In this article, we study of families of Drinfeld modules of rank 2 for which we can bound  $\max  \Zcali(\phi,x)$, and for these families, our bound depends only on the degree and the number of prime divisors of the coefficients.

In the previous setting, sequence $\Afrak_n$ is analogous to elliptic divisibility sequences (EDS). The EDS are essential in arithmetic dynamics and play an important role in problems of decidability, e.g., see \cite{poonenh10}

If $K$ is a function field, then $M_K$ will be the set of places of $K$. Henceforth, $\log_q$ always means the logarithm to the base $q.$ Our main result is the following:

\begin{theorem}\label{mainthm}
Let $\phi \colon \F_q[T] \to \End_{\F_{q}(T)}(\G_a)$ be a Drinfeld module of rank $2$ given by
$$\phi_T(x)=Tx+gx^q+\Delta x^{q^2},$$
where $g,\Delta \in \F_q[T]$. Let $N$ be the least common multiple of the degrees of irreducible divisors of $\Delta$. Let us suppose that $\phi_T(1)$ is coprime to $T\Delta$ in $\Fqt.$ Let $S$ be the finite set of places $\{v \in M_{\FFqt}:v\, |\, T \Delta\}\cup \{v_\infty\}.$ 
If $x \in \Fqt$ is not a torsion point of $\phi$, then 
$$\max \Zcali(\phi,x) \leq \left(6(q^{2N}-1)N|S|+4\right)+\dfrac{1}{2}\log_q \left(2+2\deg g+2\deg \Delta  \right).$$ 
\end{theorem}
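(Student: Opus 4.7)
The plan is to sandwich the classical height $h(\phi_{T^n}(x))$ between an exponential lower bound of order $q^{2n}\hat{h}_\phi(x)$ coming from the canonical height of $\phi$, and a much slower upper bound forced by the hypothesis $n\in \Zcali(\phi,x)$. Comparing the two estimates will confine $n$ to a bounded range matching the shape of the claimed inequality.

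\textbf{Lower bound via the canonical height.} I would begin by establishing the standard estimate $|\hat{h}_\phi(y)-h(y)|\le C_\phi$, where $C_\phi$ depends only on $\deg g$ and $\deg \Delta$; this is obtained by telescoping the defining limit $\hat{h}_\phi(y)=\lim_m q^{-2m}h(\phi_{T^m}(y))$ using the functional equation $\hat{h}_\phi(\phi_T(y))=q^2\hat{h}_\phi(y)$. Applied to $y=\phi_{T^n}(x)$ this gives
$$h(\phi_{T^n}(x))\ge q^{2n}\hat{h}_\phi(x)-C_\phi,$$
and a careful accounting of $C_\phi$ produces the additive $\tfrac12\log_q(2+2\deg g+2\deg \Delta)$ term in the theorem. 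Since $x$ is non-torsion, $\hat{h}_\phi(x)>0$, and under the hypothesis that $\phi_T(1)$ is coprime to $T\Delta$ one should get a concrete positive lower bound.

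\textbf{Upper bound via local heights and rank of apparition.} I decompose $h(\phi_{T^n}(x))=\sum_v \hat{\lambda}_v(\phi_{T^n}(x))\deg v$. For $v\notin S$ the local height $\hat{\lambda}_v$ captures only the denominator contribution $\max(-v(\phi_{T^n}(x)),0)$, which is nonzero exactly when $v\mid \Afrak_n$. For such $v$ let $r_v$ be the rank of apparition, the smallest $m\ge 1$ with $v\mid \Afrak_m$. Reducing $\phi$ modulo $v$ gives a Drinfeld module of rank $2$ (for $v\notin S$) or of rank $1$ (for $v\mid T\Delta$, since the leading coefficient then vanishes); the image of $x$ is torsion in this reduction, and its annihilator has order dividing $q^{2\deg v}-1$ in the first case and $q^{\deg v}-1\mid q^N-1$ in the second (using $\deg v\mid N$ for $v\mid \Delta$). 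In both cases $r_v\mid n$ and $r_v\le q^{2N}-1$. The technical heart of the argument should be a stability identity $\hat{\lambda}_v(\phi_{T^n}(x))=\hat{\lambda}_v(\phi_{T^{r_v}}(x))$ at good places whenever $r_v\mid n$, the Drinfeld-module analog of lifting the exponent. Now assume $n\in\Zcali(\phi,x)$; every $v\mid \Afrak_n$ satisfies $r_v<n$, and regrouping by $d=r_v$ yields
$$h(\phi_{T^n}(x))\le \sum_{v\in S}\hat{\lambda}_v(\phi_{T^n}(x))\deg v \;+\; \sum_{d\mid n,\, d<n}\;\sum_{v\notin S,\, r_v=d}\hat{\lambda}_v(\phi_{T^d}(x))\deg v.$$
Each inner sum is bounded by $\deg \Afrak_d$; counting the admissible divisors $d\le q^{2N}-1$, and using that $\phi_T(1)$ is coprime to $T\Delta$ to rule out spurious places entering the rank-of-apparition machinery, should produce an upper bound whose dominant combinatorial constant is $6(q^{2N}-1)N|S|$, with the additive $4$ absorbing lower-order corrections.

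\textbf{Main obstacle.} Matching the lower bound $q^{2n}\hat{h}_\phi(x)-C_\phi$ with the upper bound from the previous step, dividing by $q^{2n}\hat{h}_\phi(x)$ and taking $\log_q$, gives the stated inequality on $\max\Zcali(\phi,x)$. The most delicate ingredient is the stability identity for $\hat{\lambda}_v$ at good places, together with a uniform bound on how many places $v\notin S$ can share a fixed rank of apparition $d$: these are what stop the upper bound from growing as fast as $q^{2n}$ and hence produce a finite maximum. The role of $N$ in capping $r_v$, and the role of $|S|$ in bounding the bad-place contribution, are exactly what is encoded in the constant $6(q^{2N}-1)N|S|$.
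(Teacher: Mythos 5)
Your approach is a genuine departure from the paper: you try to sandwich $h(\phi_{T^n}(x))$ between the exponential lower bound $q^{2n}\hhat(x)-C_\phi$ and an upper bound built from the ranks of apparition, whereas the paper does not bound $h(\phi_{T^n}(x))$ at all but instead controls the increment $\log_q\bigl(\Nr^S(\Afrak_n)/\Nr^S(\Afrak_{n-1})\bigr)$ via the Zhao--Ji inequality (3.1), which by Lemma~\ref{aparition} is exactly the contribution of the primitive primes at level $n$. The lower-bound half of your argument (telescoping the height difference, then invoking Ghioca's Lehmer-type bound for $\hhat(x)$) is sound and matches Lemmas~\ref{diffh1}, \ref{lemmam'} and Theorem~\ref{boundheight}.

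The gap is in the upper bound. You assert that for every place $v$ entering the sum one has $r_v\mid n$ and $r_v\le q^{2N}-1$. Neither is correct for good places. For $v\notin S$, the reduction $\overline{\phi}(\F_{q^{\deg v}})$ is a finite $\F_q[T]$-module of group order $q^{\deg v}$, and $r_v$ is the $T$-exponent of (the annihilator of) $\overline{x}$ in that module; this exponent satisfies only $r_v\le\deg v$, and $\deg v$ is unbounded as $v$ ranges over good places. Indeed, primitive divisors at level $m$ are precisely the good $v$ with $r_v=m$, so $r_v$ must take arbitrarily large values. (Also $r_v\mid n$ should be replaced by $n\ge r_v$: by Lemma~\ref{aparition}, $v\mid\Afrak_n$ for \emph{all} $n\ge r_v$, not only multiples.) Once the artificial cap $r_v\le q^{2N}-1$ is removed, your regrouping gives
$$\deg\Afrak_n\ \le\ \Bigl(\text{$S$-contribution}\Bigr)\ +\ \sum_{d<n}\deg\Afrak_d,$$
and since $\deg\Afrak_d\approx q^{2d}\hhat(x)$ the right-hand side is of size $\tfrac{q^{2n}}{q^2-1}\hhat(x)$, which does \emph{not} contradict the lower bound $q^{2n}\hhat(x)-C_\phi$ for any $q\ge 2$. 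The sandwich, as stated, closes nothing. The refinement the paper uses is that Lemma~\ref{aparition} forces $\ord_v(\Afrak_n)=\ord_v(\Afrak_{n-1})$ for \emph{every} $v$ with $r_v<n$, so the comparison must be made between $\Afrak_n$ and $\Afrak_{n-1}$ only, giving the crucial factor $\bigl(\Theta-\tfrac{3}{2}\bigr)$ rather than an ineffective geometric sum.

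A second, smaller structural issue: the factor $N$ and the replacement of $q^2-1$ by $q^{2N}-1$ in the final bound do not come from capping $r_v$. The paper first proves the bound under the hypothesis that $\Delta$ splits completely over $\F_q$ (so that Lemma~\ref{localheight} kills the $S$-local term exactly), and then deduces the general case by passing to the auxiliary Drinfeld module $\psi=\phi$ viewed over $\F_{q^N}[T]$ and proving $\Zcali(\phi,x)=\Zcali(\psi,x)$; the constant $N|S|$ is the size of the set of bad places upstairs, and $q^{2N}-1$ appears because the base field has grown. Your proposal never isolates the $N=1$ case nor performs this base change, so it is not clear how your argument would recover the exact constant in the statement even if the rank-of-apparition cap were repaired.
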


We first prove this under the assumption that $N=1$ (see Section \ref{proofmainthm}) and then deduce the general case.
\noindent In what follows, $S$ will denote a finite set of places as in Theorem \ref{mainthm}. To ease the notation, let $\Theta=q^r,$ where $r$ is the rank of the corresponding Drinfeld module when we discuss higher rank Drinfeld modules.

The structure of the paper is as follows. In Section 2, we study heights in function fields and establish bounds for the difference between the classical height and the canonical height associated to a Drinfeld module. In Section 3, we introduce the main idea of the proof, inspired by the results in \cite{zhaoji}. In Section 4, we study the behavior of local heights. Finally, in Section 5, we establish Theorem \ref{mainthm} for the case $N=1$, and then, we use this particular case to conclude the result for general $N$.

\section{Heights}\label{notation}

\subsection{Heights and Drinfeld modules}
In this section, we recall two notions of heights of points $x\in \overline{\F_q(T)}$. First, we define the classical height.
\begin{definition}
Let $x$ be an algebraic element over $\F_q(T)$, and let $K/\F_q(T)$ be a finite extension such that $x\in K$. We define the \emph{height} of $x$, denoted by $h(x)$ as
$$h(x)=\dfrac{1}{[K:\FFqt]}\sum_{v \in M_K} \logmax\{|x|_v,1\}.$$
\end{definition}

\begin{remark}
If $x=a(T)/b(T) \in \FFqt$,  such that $\gcd(a(T),b(T))=1,$ then $h(x)$ can be expressed as
$$h(x)=\max \left\{\deg a(T),\deg b(T)\right\}.$$
\end{remark}

\noindent In \cite{denis}, Denis defined a height corresponding to a Drinfeld module called canonical height. This height is the analog of the N\'eron-Tate height on elliptic curves.

\begin{definition}[Denis, \cite{denis}]
Let $\phi$ be an $\Fqt$-Drinfeld module of rank $r$ defined over an algebraic extension K of $\FFqt$. The \emph{global canonical height} attached to $\phi$ is defined as follows: For $x\in K$,
$$\hhat(x)=\lim_{n\to\infty}\dfrac{h(\phi_{T^n}(x))}{\Theta^{n}}.$$
\end{definition}

\begin{remark}
Following Th\'eor\`eme 1 in \cite{denis}, if $a$ is any non-constant element in $\Fqt$, then we can calculate $\hhat(x)$ as follow
$$\hhat(x)=\lim_{n\to \infty} \dfrac{h(\phi_{a^n}(x))}{(\deg \phi_{a}(x))^n}.$$
\end{remark}

\noindent 
Now, we give a list of height properties that we will use to prove the main theorem.
If  $\phi$ is a Drinfeld module defined over a field $k$, given by $\phi_T(x)=Tx+g_1x^q+\cdots g_{r}x^{q^r}$ and $\gamma \in \overline{\FFqt}^{\times}$, then we can obtain another Drinfeld module by conjugating the Drinfeld module $\phi$ by multiplication by $\gamma,$ and it is denoted by $\phi^{(\gamma)}$. If $a\in \F_q[T]$, then $\phi^{(\gamma)}_a(x)=Tx+g_1\gamma^{q-1}x^{q}+\cdots +g_r \gamma^{q^r-1}x^{q^r}$. This new Drinfeld module $\phi^{(\gamma)}$ is defined over $k(\gamma^{1/(q^r-1)})$.
If $\afrak$ is an ideal of $\Fqt$, then $\Nr(\afrak)=\#(\Fqt/\afrak)$. The norm of the infinite place is $\Nr(v_\infty)=q$.
For a finite set of primes $S$ in $\Fqt$, we define $\ds \Nr^S(\afrak)=\prod_{\pfrak \notin S}\Nr(\pfrak)^{\ord_\pfrak(\afrak)}.$ For $a(T)\in \F_q[T]$, we denote by $\Nr(a(T))$ the norm of the ideal generated by $a(T)$. Similarly, $\Nr^S(a(T))$ represent the norm $\Nr^S$ of the ideal generated by $a(T).$ 

\begin{proposition}\label{properties} Let $\phi$ be an $\F_q[T]$-Drinfeld module of rank $r$ defined over $\overline{\FFqt}$, $\gamma  \in \overline{\FFqt}^\times$, and $x \in \FFqt$, then
\hfill
\begin{enumerate}
\item[(i)]$\hhat_{\phi}(x)=\hhat_{\phi^{(\gamma)}}(\gamma^{-1}x)$.
\item[(ii)] $\hhat(\phitnx)=\Theta^n\hhat(x)$.
\end{enumerate}
\end{proposition}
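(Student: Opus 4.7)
The plan is to deduce both statements directly from the limit definition of $\hhat$, using only the compositional properties of $\phi$ and standard estimates for the classical height. Part (ii) is essentially tautological: since $\phi_{T^m}\circ\phi_{T^n}=\phi_{T^{m+n}}$, one has
$$\hhat(\phi_{T^n}(x))=\lim_{m\to\infty}\dfrac{h(\phi_{T^m}(\phi_{T^n}(x)))}{\Theta^m}=\Theta^n\lim_{m\to\infty}\dfrac{h(\phi_{T^{m+n}}(x))}{\Theta^{m+n}}=\Theta^n\hhat(x),$$
after reindexing the limit by $k=m+n$.

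For part (i), the idea is to produce a bounded relation between the forward $\phi^{(\gamma)}$-orbit of $\gamma^{-1}x$ and the forward $\phi$-orbit of $x$. Directly from the formula for $\phi^{(\gamma)}_T$ recorded in the excerpt, one checks that $\phi^{(\gamma)}_T(y)=\gamma^{-1}\phi_T(\gamma y)$ for every $y\in\overline{\FFqt}$, and since $\phi$ and $\phi^{(\gamma)}$ are both ring homomorphisms out of $\Fqt$, this extends to $\phi^{(\gamma)}_a(y)=\gamma^{-1}\phi_a(\gamma y)$ for every $a\in\Fqt$. Setting $a=T^n$ and $y=\gamma^{-1}x$ gives the key identity
$$\phi^{(\gamma)}_{T^n}(\gamma^{-1}x)=\gamma^{-1}\phi_{T^n}(x)\qquad\text{for all }n\geq 0.$$
Applying the standard inequality $|h(\alpha\beta)-h(\beta)|\leq h(\alpha)$ with $\alpha=\gamma^{-1}$ therefore yields
$$\bigl|h\bigl(\phi^{(\gamma)}_{T^n}(\gamma^{-1}x)\bigr)-h(\phi_{T^n}(x))\bigr|\leq h(\gamma^{-1}),$$
and the right-hand side is a constant independent of $n$. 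Dividing by $\Theta^n$ and letting $n\to\infty$ collapses this error term and gives $\hhat_{\phi^{(\gamma)}}(\gamma^{-1}x)=\hhat_\phi(x)$.

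The only point that requires any care is bookkeeping: $\phi^{(\gamma)}$ is in general defined over the proper algebraic extension $\FFqt(\gamma^{1/(q^r-1)})$, so one must invoke the extension of both $h$ and $\hhat$ to $\overline{\FFqt}$ (which is how the definitions in Section~\ref{notation} were stated) to make sense of the limit on the left. Beyond this, no real obstacle arises; the argument is the usual Tate-style telescoping that already underlies the existence of $\hhat$.
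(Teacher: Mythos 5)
Your proof of part (ii) is identical to the paper's: the limit definition of $\hhat$, the identity $\phi_{T^m}\circ\phi_{T^n}=\phi_{T^{m+n}}$, and reindexing. No comment needed there.

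For part (i), the paper simply cites Proposition 2 of Poonen's paper, whereas you give a self-contained argument. Your route is correct: the computation $\phi^{(\gamma)}_T(y)=\gamma^{-1}\phi_T(\gamma y)$ follows directly from the displayed formula for $\phi^{(\gamma)}_T$, and since a Drinfeld module over $\F_q[T]$ is determined by its value at $T$ (both $a\mapsto\phi^{(\gamma)}_a$ and $a\mapsto\gamma^{-1}\phi_a(\gamma\,\cdot\,)$ are $\F_q$-algebra homomorphisms into $\End(\G_a)$ that agree at $T$), the identity $\phi^{(\gamma)}_{T^n}(\gamma^{-1}x)=\gamma^{-1}\phi_{T^n}(x)$ holds for all $n$. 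The elementary estimate $|h(\gamma^{-1}\beta)-h(\beta)|\leq h(\gamma^{-1})$ then turns this into a bounded error term that vanishes after dividing by $\Theta^n$ and taking the limit. Your bookkeeping remark about extending $h$ and $\hhat$ to $\overline{\FFqt}$ is appropriate, since $\gamma^{-1}x$ and the coefficients of $\phi^{(\gamma)}$ generally live in a proper extension of $\FFqt$. The trade-off is transparency versus brevity: the citation is shorter, but your argument makes the mechanism visible and requires nothing beyond the triangle inequality for heights; it is essentially the proof Poonen's cited proposition encapsulates.
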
 
 
 \begin{proof}
Item (i) follows by Proposition 2 in \cite{poonen}. To prove (ii), let $x$ be an element in $\FFqt$, then
\begin{align*}
\hhat(\phi_{T^n}(x))&=\lim _{m \to \infty} \dfrac{h(\phi_{T^m}(\phi_{T^n}(x)))}{\Theta^m}\\
&=\lim_{m \to \infty} \Theta^n \dfrac{h(\phi_{T^{n+m}(x)})}{\Theta^{n+m}}\\
&=\Theta^n\hhat(x),
\end{align*}
which gives the desired result.
\end{proof}

\begin{proposition}\label{proph} Let $x=a(T)/b(T)$ be an element in $\FFqt$, with $a(T)$ and $b(T)$ coprime polynomials. Let $S$ be a finite set of places containing $v_\infty$. Then
$$h(x)=\log_q \Nr^S(a(T))+\sum_{v\in S}\logmax\{1,|x|^{-1}_v\}.$$
\end{proposition}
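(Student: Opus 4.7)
The plan is to rewrite $h(x)$ using the product formula so that the contribution of each place is expressed in terms of $|x|_v^{-1}$, then to split the sum into places in $S$ and places outside $S$, and finally to identify the off-$S$ contribution with $\log_q \Nr^S(a(T))$ using the coprimality of $a$ and $b$.

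First, I would start from the definition
$$h(x)=\sum_{v \in M_{\FFqt}} \log_q \max\{|x|_v,1\}$$
(note that $[\FFqt : \FFqt]=1$, so there is no normalizing factor). The product formula $\sum_{v} \log_q |x|_v = 0$ for $x \neq 0$, combined with the identity $\log^{+}t - \log^{+}(1/t)=\log t$ applied termwise, gives
$$h(x)=\sum_{v} \log_q \max\{1,|x|_v^{-1}\}.$$
This is the key rewriting: replacing $\max(|x|_v,1)$ by $\max(1,|x|_v^{-1})$ aligns the formula with a zero-analysis of the numerator $a(T)$. I would then split the sum according to whether $v\in S$ or $v\notin S$, obtaining
$$h(x)=\sum_{v \in S}\log_q\max\{1,|x|_v^{-1}\}+\sum_{v \notin S}\log_q\max\{1,|x|_v^{-1}\}.$$

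Since $v_\infty \in S$, every place $v \notin S$ corresponds to a monic irreducible $\pfrak \in \Fqt$, with $|x|_v = \Nr(\pfrak)^{-\ord_\pfrak(x)}$, so
$$\log_q \max\{1,|x|_v^{-1}\}=\max\{0,\ord_\pfrak(x)\}\,\log_q \Nr(\pfrak).$$
Because $\gcd(a,b)=1$, for each such $\pfrak$ at most one of $\ord_\pfrak(a)$, $\ord_\pfrak(b)$ is positive, so $\max\{0,\ord_\pfrak(a)-\ord_\pfrak(b)\}=\ord_\pfrak(a)$. Summing,
$$\sum_{v \notin S}\log_q\max\{1,|x|_v^{-1}\}=\sum_{\pfrak \notin S}\ord_\pfrak(a)\log_q \Nr(\pfrak)=\log_q \prod_{\pfrak \notin S}\Nr(\pfrak)^{\ord_\pfrak(a)}=\log_q \Nr^S(a(T)),$$
and substituting back yields the desired identity.

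No serious obstacle is expected here; the proof is essentially bookkeeping once the product-formula rewriting is in place. The only point requiring care is the coprimality hypothesis $\gcd(a,b)=1$, which is precisely what allows the clean identification $\max\{0,\ord_\pfrak(x)\}=\ord_\pfrak(a)$ at a prime $\pfrak \notin S$, ensuring that $b(T)$ contributes nothing to the off-$S$ sum.
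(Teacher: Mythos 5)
Your proof is correct and takes essentially the same route as the paper: the paper's opening observation that $h(x)=h(x^{-1})$ is exactly your product-formula rewriting $h(x)=\sum_v\log_q\max\{1,|x|_v^{-1}\}$, and the remaining identification of the off-$S$ sum with $\log_q\Nr^S(a(T))$ via coprimality is the same computation. You are slightly more explicit in deriving $h(x)=h(x^{-1})$ from the product formula and in spelling out the case analysis at primes $\pfrak\notin S$, but there is no substantive difference.
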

\begin{proof}
Notice $h(x)=h(x^{-1})$. Thus, we only need to prove $\Nr^S(a(T))=\prod_{v\notin S} \max\{1,|x^{-1}|_v\}$. If $v\notin S$ and $v(a(T))>0$, then $$\max\{1,|x^{-1}|_v\}=\max\{1,q^{v(x)\deg(v)}\}=q^{deg(v)v(x)}=(q^{deg(v)})^{v(a(T))}=\Nr_v(a(T)),$$ where $\Nr_v(a(T))=\Nr(\pfrak_v)^{v(a(T))}$, and $\pfrak_v$ the prime ideal associated to the place $v$.  When $v(a(T))=0$, both $\Nr_v(a(T))$ and $ \max\{1,|x^{-1}|_v\}$ are equal to $1$. We conclude that $\Nr_v(a(T))=\max\{1,|x^{-1}|_v\}$ for all $v \notin S.$
\end{proof}

\noindent The following corollary follows immediately from Proposition \ref{proph}.

\begin{corollary}\label{coroheight}
If $x\in \F_q(T)$, and $\phi_{T^n}(x)=\Afrak_n/\Bfrak_n$, then $$\ds h(\phitnx)=\log_q \Nr^S(\Afrak_n)+\sum_{v \in S}\logmax \{1,|\phitnx^{-1}|_v\}.$$
\end{corollary}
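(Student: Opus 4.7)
The plan is to invoke Proposition \ref{proph} essentially verbatim, with the element $\phi_{T^n}(x) \in \F_q(T)$ playing the role of the arbitrary rational function. Recall that Proposition \ref{proph} requires two hypotheses on the rational expression: that the numerator and denominator are coprime polynomials in $\F_q[T]$, and that the chosen set of places $S$ contains $v_\infty$. Both conditions are built into the setup of the corollary: by construction of $\Afrak_n$ and $\Bfrak_n$ in the introduction they are coprime, and the set $S$ fixed in Theorem \ref{mainthm} is explicitly $\{v : v \mid T\Delta\} \cup \{v_\infty\}$, which contains $v_\infty$.

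Thus I would simply apply Proposition \ref{proph} to $\phi_{T^n}(x) = \Afrak_n / \Bfrak_n$: the formula
$$h(\phi_{T^n}(x)) = \log_q \Nr^S(\Afrak_n) + \sum_{v \in S} \log_q \max\{1, |\phi_{T^n}(x)^{-1}|_v\}$$
is exactly the assertion of Proposition \ref{proph} after the substitutions $x \mapsto \phi_{T^n}(x)$, $a(T) \mapsto \Afrak_n$, $b(T) \mapsto \Bfrak_n$. There is nothing to prove beyond this substitution; the corollary is a specialization of the more general statement and serves only to record the identity in the form that will be convenient for later sections. Accordingly, there is no real obstacle, and the entire proof reduces to the single sentence ``apply Proposition \ref{proph} to $\phi_{T^n}(x)$.''
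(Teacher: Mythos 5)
Your proposal matches the paper exactly: the paper states that the corollary "follows immediately from Proposition \ref{proph}," which is precisely the one-line substitution you describe. You also correctly note the two hypotheses being used (coprimality of $\Afrak_n$ and $\Bfrak_n$, and $v_\infty \in S$), so the proof is complete and takes the same route as the paper.
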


\subsection{Difference between $h$ and $\hhat$}\label{difference}

\noindent  To establish bounds for the maximum of Zsigmondy sets, we base our approach on understanding the difference between the classical height $h$ and the canonical height $\hhat$. We estimate this difference and obtain bounds independent of the element $x$ in $\Fqt$. The bounds depend on the coefficients of the Drinfeld module $\phi.$ Let $\phi$ be a rank $r$ $\F_q[T]$-Drinfeld module defined over $\F_q(T)$ such that $\phi_T(x)=Tx+c_1x^q+...+c_rx^{\Theta}$, where $c_i\in \F_q[T]$, and $c_r\neq 0$. Then, we define two constants associated to $\phi$,
\begin{align*}
M_\phi&=\dfrac{q}{(\Theta-1)(q-1)}\max\{1-\deg c_r,...,\deg c_{r-1}-\deg c_r,0\},\\
M'_\phi&=\dfrac{1}{(\Theta-1)} \max\{1,\deg c_1,...,\deg c_r\}.
\end{align*}

\noindent By Proposition $6$ in \cite{poonen}, we can decompose the heights $h$ and $\hhat$ as the sum of the local heights $h_v(x)$ and $\hhat_v(x)$, respectively, where

$$h_v(x)=\logmax\{1,|x|_v\}, \text{ \ \ and \ \ } \hhat_v(x)=\lim_{n \to \infty} \dfrac{\logmax\{1,|\phi_{T^n}(x)|_v\}}{\Theta^n}.$$

\noindent Now, we state the results concerning the bounds for the difference of heights.

\begin{lemma}\label{diffh1}
Let $\phi$ be a rank $r$ $\F_q[T]$-Drinfeld module defined over $\FFqt$ such that $\phitx=Tx+c_1x^q+\cdots + c_rx^{q^r}$, with $c_i \in \F_q[T]$ for $0\leq i <r$ and $c_r \neq 0$. Then, for all $x\in \F_q[T],$ we have 
$$h(x)-\hhat(x) \leq M_\phi.$$

\end{lemma}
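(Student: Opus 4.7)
The plan is to proceed by telescoping. From the definition of $\hhat$ we have the identity
\[
h(x) - \hhat(x) = \sum_{n=0}^{\infty} \frac{\Theta\, h(\phi_{T^n}(x)) - h(\phi_{T^{n+1}}(x))}{\Theta^{n+1}},
\]
so the lemma reduces to establishing the uniform one-step inequality
\[
\Theta\, h(y) - h(\phi_T(y)) \leq \frac{q\mu}{q-1} \qquad \text{for every } y\in \Fqt,
\]
where $\mu = \max\{1-\deg c_r,\deg c_1-\deg c_r,\ldots,\deg c_{r-1}-\deg c_r, 0\}$; summing the geometric series then gives $h(x)-\hhat(x) \leq \frac{q\mu}{(q-1)(\Theta-1)} = M_\phi$.

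To prove the one-step inequality, the key observation is that for $y\in\Fqt$ with the $c_i$ lying in $\Fqt$, the image $\phi_T(y)=Ty+c_1 y^q+\cdots+c_r y^{q^r}$ is again a polynomial, so $h(y)=\deg y$ and $h(\phi_T(y))=\deg \phi_T(y)\geq 0$. Setting $e=\deg y$ and $d_i=\deg c_i$ (with the convention $d_0=1$), I would split the analysis at the threshold $e_0 := \mu/(q^{r-1}(q-1))$. In the regime $e>e_0$, for each index $i<r$ one has $(q^r-q^i)\, e\geq q^{r-1}(q-1)\, e>\mu\geq d_i-d_r$, so the degree $d_r+q^r e$ of the term $c_r y^{q^r}$ strictly dominates the degree $d_i+q^i e$ of every other summand of $\phi_T(y)$. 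Hence no leading-term cancellation is possible, $\deg \phi_T(y)=d_r+q^r e$, and $\Theta\, h(y)-h(\phi_T(y))=-d_r\leq 0$. In the remaining regime $e\leq e_0$, the trivial estimate $h(\phi_T(y))\geq 0$ (which absorbs any cancellation that might occur) gives
\[
\Theta\, h(y)-h(\phi_T(y))\leq q^r e\leq \frac{q^r\mu}{q^{r-1}(q-1)}=\frac{q\mu}{q-1}.
\]

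The only delicate ingredient is identifying the correct threshold: the factor $q/(q-1)$ in $M_\phi$ arises because the dominance condition $(q^r-q^i)\,e>d_i-d_r$ is tightest at $i=r-1$, where $q^r-q^i=q^{r-1}(q-1)$ is smallest. Because $y$ and $\phi_T(y)$ remain polynomials throughout the iteration, no place-by-place local height analysis is required and the whole argument reduces to comparing degrees at $v_\infty$.
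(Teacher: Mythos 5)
Your proof is correct and follows essentially the same route as the paper: telescope $h(x)-\hhat(x)$ over iterates, observe that since $x$ and the coefficients lie in $\F_q[T]$ all finite local heights vanish so only the degree at $v_\infty$ matters, then split into two cases according to whether the leading term $c_r y^{q^r}$ dominates. The only cosmetic difference is that you phrase the analysis directly in terms of degrees with a uniform threshold $e_0=\mu/(q^{r-1}(q-1))$, whereas the paper phrases it in terms of the absolute values $|c_i/c_r|_{v_\infty}^{1/(\Theta-q^i)}$ and then upper-bounds the exponents $\Theta/(\Theta-q^i)$ by $q/(q-1)$; both yield exactly the constant $M_\phi$.
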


\begin{proof}
The first step in the proof is to bound the expression $\logmax \{|x|_v^{\Theta},1\}-\logmax\{|\phi_T(x)|_v,1\}$ for each place $v$ in $M_{\F_q(T)}$. From now on, we will denote the previous difference by  $\kappa_v$. Since $\phi_T$ has coefficients in $\F_q[T]$, and $x\in \F_q[T]$, both $|x|_v$ and $|\phi_T(x)|_v$ are bounded by $1$ for all finite places $v.$ Therefore, for these places, we have $\logmax \{|x|_v^{\Theta},1\}-\logmax\{|\phi_T(x)|_v,1\}=0.$ Thus, we only need to bound $\logmax \{|x|_v^{\Theta},1\}-\logmax\{|\phi_T(x)|_v,1\}$ when $v$ is the infinite place $v_\infty.$

\noindent To achieve this, first, we suppose that $|c_rx^{\Theta}|_{v_\infty} > |c_ix^{q^i}|_{v_\infty}$ for all $i \in \{0,...,r-1\}$, where we use the convention $c_0=T.$ Then,
$$\kappa_{v_\infty}=\log_q|x|_{v_\infty}^{\Theta}-\log_q|x|_{v_\infty}^{\Theta}-\log_q|c_r|_{v_\infty}=-\log_q|c_r|_{v_\infty}=-\deg c_r.$$

\noindent Now, if $|x|_{v_\infty} \leq \left|\dfrac{c_i}{c_r} \right|_{v_\infty}^{\frac{1}{\Theta-q^i}}$ for some $i\in \{0,...,r-1\},$ we have

$$|x|_{v_\infty}^{\Theta}\leq \max \left\{  \left|\dfrac{T}{c_r}\right|_{v_\infty}^{\frac{\Theta}{\Theta-1}}, \left|\dfrac{c_1}{c_r}\right|_{v_\infty}^{\frac{\Theta}{\Theta-q}},..., \left|\dfrac{c_{r-1}}{c_r}\right|_{v_\infty}^{\frac{\Theta}{\Theta-q^{r-1}}} \right\}.$$
Therefore

\begin{align*}
\kappa_{v_\infty} &\leq \logmax \left\{  \left|\dfrac{T}{c_r}\right|_{v_\infty}^{\frac{\Theta}{\Theta-1}}, \left|\dfrac{c_1}{c_r}\right|_{v_\infty}^{\frac{\Theta}{\Theta-q}},..., \left|\dfrac{c_{r-1}}{c_r}\right|_{v_\infty}^{\frac{\Theta}{\Theta-q^{r-1}}} \right\}\\
&\leq \dfrac{q}{(q-1)}\max\{1-\deg c_r,...,\deg c_{r-1}-\deg c_r,\}.
\end{align*}
\noindent This way, we conclude that
$$\kappa_{v_\infty} \leq \dfrac{q}{(q-1)}\max\{1-\deg c_r,...,\deg c_{r-1}-\deg c_r,0\} .$$

\noindent Let us denote by $m_\phi$ the expression $\dfrac{q}{(q-1)}\max\{1-\deg c_r,...,\deg c_{r-1}-\deg c_r,0\}.$ 

\noindent The second step is to bound 
$\logmax\{|x|_v,1\}-\dfrac{1}{\Theta^n} \logmax \{|\phi_{T^n}(x)|_v,1\}.$ Following the same strategy as in the first step, we obtain that $\phi_{T^n}(x)\in \F_q[T],$ since $x\in \F_q[T]$. Then, $\logmax\{|x|_v,1\}-\dfrac{1}{\Theta^n} \logmax \{|\phi_{T^n}(x)|_v,1\}=0$ for all finite places. On the other hand, if $v=v_\infty,$ we have
\begin{align*}
\logmax\{|x|_{v_\infty},1\}-&\dfrac{1}{\Theta^n} \logmax \{|\phi_{T^n}(x)|_{v_\infty},1\}\\
&= \dfrac{1}{\Theta^n} \left( \logmax\{|x|_{v_\infty}^{\Theta^n},1\}-\logmax\{|\phi_{T^n}(x)|_{v_\infty},1 \} \right) \\
&= \sum_{k=0}^{n-1}\dfrac{1}{\Theta^{k+1}}\left( \logmax\{|\phi_{T^k}(x)|_{v_\infty}^{\Theta},1\} - \logmax \{|\phi_{T^{k+1}}(x)|_{v_\infty},1\}\right) \\
&\leq \sum_{k=1}^n \dfrac{1}{\Theta^k} m_\phi.
\end{align*}

\noindent Therefore, taking the limit $n \to \infty$, we conclude

$$h_{v_\infty}(x)-\hhat_{v_\infty}(x)\leq m_\phi \sum_{k=1}^{\infty}\dfrac{1}{\Theta^k}=\dfrac{m_\phi}{\Theta-1}=M_\phi.$$

\noindent Finally, we note $\ds h(x)-\hhat(x)=h_{v_\infty}(x)-\hhat_{v_\infty}(x) \leq M_\phi.$
\end{proof}

\begin{lemma}\label{lemmam'}
Let $\phi$ be a rank $r$ $\Fqt$-Drinfeld module defined over $\FFqt$ such that $\phitx=Tx+c_1x^q+\cdots + c_rx^{\Theta}$ with $c_i \in \Fqt$ for $0 \leq i <r$ and $c_r\neq 0$. Then for all $x \in \F_q[T]$, we have 
$$\hhat(x)-h(x) \leq M'_\phi.$$
\end{lemma}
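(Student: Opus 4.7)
The plan is to mirror the proof of Lemma \ref{diffh1} almost step-for-step, but working with the opposite inequality at each place. The key point is that the same telescoping argument works, provided we have a uniform upper bound on $\log\max\{|\phi_T(y)|_v,1\} - \Theta\,\log\max\{|y|_v,1\}$ (rather than a lower bound) that holds for every iterate $y = \phi_{T^k}(x)\in \F_q[T]$.

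First I would dispose of the finite places. Since $x\in\F_q[T]$ and each $c_i\in\F_q[T]$, one has $|x|_v\leq 1$ and $|c_i|_v\leq 1$ at every finite $v$, so $|\phi_{T^n}(x)|_v\leq 1$ for all $n$ and hence $\hhat_v(x) = h_v(x) = 0$. Thus the entire difference $\hhat(x)-h(x)$ is concentrated at $v_\infty$. Set $m'_\phi := \max\{1,\deg c_1,\dots,\deg c_r\}$, so that $|c_i|_{v_\infty}\leq q^{m'_\phi}$ for all $i$ (with the convention $c_0=T$, whose degree is $1$, already accounted for by the ``$1$'' in the max).

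Next, at $v_\infty$, I would use the non-archimedean inequality
\[
|\phi_T(y)|_{v_\infty}\ \leq\ \max_{0\leq i\leq r}|c_i|_{v_\infty}\,|y|_{v_\infty}^{q^i}\ \leq\ q^{m'_\phi}\max\{1,|y|_{v_\infty}\}^{\Theta}
\]
for any $y\in\F_q[T]$ (the last step splits on whether $|y|_{v_\infty}\geq 1$ or $y=0$). Taking $\log\max\{\cdot,1\}$ and noting this is a monotone operation that ignores values $\leq 1$, I get the one-step bound
\[
\log\max\{|\phi_T(y)|_{v_\infty},1\}-\Theta\,\log\max\{|y|_{v_\infty},1\}\ \leq\ m'_\phi.
\]
Since $\phi_{T^k}(x)\in\F_q[T]$ for every $k$, I may apply this bound with $y=\phi_{T^k}(x)$ at each step.

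Finally, I would telescope exactly as in the proof of Lemma \ref{diffh1}, writing
\[
\frac{1}{\Theta^n}\log\max\{|\phi_{T^n}(x)|_{v_\infty},1\}-\log\max\{|x|_{v_\infty},1\}
= \sum_{k=0}^{n-1}\frac{1}{\Theta^{k+1}}\Bigl(\log\max\{|\phi_{T^{k+1}}(x)|_{v_\infty},1\}-\Theta\log\max\{|\phi_{T^k}(x)|_{v_\infty},1\}\Bigr),
\]
bounding each summand by $m'_\phi/\Theta^{k+1}$ and letting $n\to\infty$ to obtain $\hhat_{v_\infty}(x)-h_{v_\infty}(x)\leq m'_\phi/(\Theta-1) = M'_\phi$. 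Combining with the vanishing contribution from finite places gives $\hhat(x)-h(x)\leq M'_\phi$. The main (very mild) obstacle is simply making sure the case analysis $|\phi_T(y)|_{v_\infty}\gtrless 1$ is handled so that the $\max\{\cdot,1\}$'s do not throw off the bound; unlike Lemma \ref{diffh1}, we do not need the delicate argument isolating the leading term, because the ultrametric inequality immediately yields a clean upper bound.
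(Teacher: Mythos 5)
Your proof is correct and follows the same overall structure as the paper's: dispose of all finite places (since $\phi_{T^k}(x)\in\F_q[T]$), establish the one-step estimate at $v_\infty$, then telescope and pass to the limit. The only difference is at the one-step estimate, where you use the ultrametric inequality directly ($|\phi_T(y)|_{v_\infty}\leq\max_i|c_i|_{v_\infty}|y|_{v_\infty}^{q^i}\leq q^{m'_\phi}\max\{1,|y|_{v_\infty}\}^\Theta$) rather than the paper's two-case analysis on whether the top term $c_ry^\Theta$ dominates; this is a genuine (if small) simplification, since for this direction of the inequality the case split the paper inherits from Lemma~\ref{diffh1} is not actually needed.
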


\begin{proof}

As in the proof of Lemma \ref{diffh1}, the first step is to estimate the difference between $\logmax\{|\phi_T(x)|_v,1\}$ and $\logmax\{|x|^{\Theta},1\}$. Again, we only study this difference for the infinite place since $x\in \F_q[T]$. If we suppose $|c_rx^{\Theta}|_{v_\infty}>|c_ix^{q^i}|_{v_\infty}$ for all $0\leq i <r,$ then $\logmax \{|\phi_T(x)|_{v_\infty},1\}-\logmax\{|x|_{v_\infty}^\Theta\,1\}=\log_q|c_r|_{v_\infty}=\deg c_r$. On the other hand, we suppose  $|x|_{v_\infty} \leq \left| \dfrac{c_i}{c_r} \right|_{v_\infty}^{\frac{1}{\Theta-q^i}}$ for some $i<r,$ and that $|c_ix^{q^i}|$ is the maximum between $|Tx|,|c_1x^q|,...,|c_rx^{\Theta}|$. Therefore,

\begin{align*}
-\kappa_{v_\infty}
&\leq \logmax\{|c_ix^{q^i}|_{v_\infty},1\}-\log_q |x|_{v_\infty}^\Theta \\
&= \max \left\{\log_q|c_i|_{v_\infty}+\log_q|x|_{v_\infty}^{q^i-\Theta},\log_q|x|_{v_\infty}^{-\Theta}\right\}\\
&\leq\deg c_i \\
&\leq \max\{1,\deg c_1,...,\deg c_r\} =: m'_\phi.
\end{align*}
The second equality holds because $|x|_{v_\infty}>1$. As in the previous estimation, we obtain
$$\hhat(x)-h(x)=\hhat_{v_\infty}(x)-h_{v_\infty}(x )\leq \dfrac{1}{\Theta-1}m'_\phi=M_\phi'.$$
\end{proof}

\subsection{Bounds in rank 2}\label{boundsrank2}

Although the bounds from the previous section for the difference between the heights hold for Drinfeld modules of arbitrary rank, we use them in the particular case of rank 2. In this subsection, we give the explicit bounds in terms of the coefficients $g$ and $\Delta$ to establish a bound for the Zsigmondy set $\Zcali(\phi,x)$ as a function of $g$ and $\Delta$.

\noindent Following the notation of Subsection \ref{difference}, for a Drinfeld module $\phi$ given by $\phi_T(x)=Tx+gx^{q}+\Delta x^{q^2}$, we have 

\begin{align*}
M_\phi &= \dfrac{q}{(q^2-1)(q-1)}\max\{1-\deg \Delta, \deg g-\deg\Delta,0\} \text{ and }\\
M_\phi' &= \dfrac{1}{(q^2-1)}\max\{1,\deg g,\deg \Delta\}.
\end{align*}

\noindent Then, we get the following bounds for the difference between $h$ and $\hhat$
\begin{align*}
h(x)-\hhat(x) &\leq \dfrac{1}{(q-1)^2}(1+\deg g) \text{ and } \\
\hhat(x)-h(x) &\leq \dfrac{1}{(q-1)^2}(\deg \Delta+\deg g+1).
\end{align*}

\section{Previous results and strategy}

In the proof of Theorem 4.6 in \cite{zhaoji}, the authors obtain the following inequality
\begin{equation}\label{n0}
\log_q \left( \dfrac{\Nr^S(\Afrak_n)}{\Nr^S(\Afrak_{n-1})} \right) \geq \left( \Theta-\dfrac{3}{2} \right)\Theta^{n-1}\hhat(x)-M_\phi.
\end{equation}
This inequality holds for all natural numbers greater than $n_0$. The existence of such $n_0$ is proved in Lemma 4.5 (1) in \cite{zhaoji}.

Our strategy to prove the main theorem in this article is to find such $n_0$ and find explicitly bounds on $n$ such that 

$$\left( \Theta-\dfrac{3}{2} \right)\Theta^{n-1}\hhat(x)-M_\phi >0, \text{ for all } n\geq n_0.$$



 

\begin{definition}
For a place $v$ and a sequence of elements $\{\Afrak_n\}_{n\geq 1}$ in $\Fqt$, the rank of apparition at $v$ is defined by the integer
$$r_v=\min \{n\geq 1:\ord_v(\Afrak_n)>0\}.$$
If no such $n$ exists, $r_v$ is set to be $\infty.$
\end{definition}

\begin{lemma}[\cite{zhaoji}]\label{aparition} For every finite place $v$ such that $v(T)=0$, and a sequence of polynomials $\{\Afrak_n\}$ derived from $\{\phitnx\}_{n\geq 1}$, if $\ell \leq r_v$, we have $\ord_v(\Afrak_{\ell-1})=0$; otherwise, $\ord_v(\Afrak_\ell)=\ord_v(\Afrak_{\ell-1})$.
\end{lemma}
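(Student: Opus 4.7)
The plan is to handle the two assertions separately. The first is essentially a restatement of the definition of $r_v$: if $\ell \leq r_v$, then $\ell-1 < r_v$, and by the minimality of $r_v$ among positive integers $n$ with $\ord_v(\Afrak_n) > 0$, we obtain $\ord_v(\Afrak_{\ell-1}) = 0$, where $\Afrak_0$ is interpreted as the numerator of $x$ in lowest terms (which is coprime to $v$ under the standing hypotheses on $x$).

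For the stability statement when $\ell > r_v$, I would argue by induction on $\ell$, with inductive hypothesis $\ord_v(\Afrak_{\ell-1}) = \ord_v(\Afrak_{r_v}) > 0$. The key input is the identity $\phi_{T^\ell}(x) = \phi_T(\phi_{T^{\ell-1}}(x))$. Writing $\phi_{T^{\ell-1}}(x) = \Afrak_{\ell-1}/\Bfrak_{\ell-1}$ in lowest terms and substituting into $\phi_T(y) = Ty + gy^q + \Delta y^{q^2}$ gives
$$\phi_{T^\ell}(x) = \frac{T\Afrak_{\ell-1}\Bfrak_{\ell-1}^{q^2-1} + g\Afrak_{\ell-1}^{q}\Bfrak_{\ell-1}^{q^2-q} + \Delta\Afrak_{\ell-1}^{q^2}}{\Bfrak_{\ell-1}^{q^2}}.$$
Under the hypotheses $v(T) = 0$, $v(g), v(\Delta) \geq 0$, and $v(\Bfrak_{\ell-1}) = 0$ (by coprimality), the three summands in the numerator have $v$-valuations equal to $v(\Afrak_{\ell-1})$, at least $q\,v(\Afrak_{\ell-1})$, and at least $q^2\,v(\Afrak_{\ell-1})$ respectively. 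Since $v(\Afrak_{\ell-1}) > 0$ by the inductive hypothesis, the first summand strictly dominates, so the ultrametric inequality forces the numerator to have $v$-valuation exactly $v(\Afrak_{\ell-1})$.

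To conclude, it remains to compare this expression with the reduced form $\Afrak_\ell/\Bfrak_\ell$. Since the unreduced denominator $\Bfrak_{\ell-1}^{q^2}$ has $v$-valuation $0$, no cancellation at $v$ occurs when clearing common factors, and therefore $\ord_v(\Afrak_\ell) = v(\Afrak_{\ell-1}) = \ord_v(\Afrak_{\ell-1})$, closing the induction. The only delicate point, and precisely where the hypothesis $v(T) = 0$ enters, is ensuring that the first summand strictly dominates the others: for a place $v$ dividing $T$ the valuation of $T\Afrak_{\ell-1}\Bfrak_{\ell-1}^{q^2-1}$ could match or exceed those of the higher terms in a way that destroys the equality, and the statement would fail. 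The argument extends verbatim to arbitrary rank, as each additional summand of the form $c_i\Afrak_{\ell-1}^{q^i}\Bfrak_{\ell-1}^{q^r-q^i}$ contributes a $v$-valuation of at least $q^i v(\Afrak_{\ell-1}) > v(\Afrak_{\ell-1})$.
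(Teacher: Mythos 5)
Your proof is correct and is the standard direct argument; the paper itself does not reproduce a proof (it only cites Lemma~4.3 of Zhao--Ji), but your approach---substitute the reduced form $\Afrak_{\ell-1}/\Bfrak_{\ell-1}$ into $\phi_T$, note that $v(\Bfrak_{\ell-1})=0$ once $v(\Afrak_{\ell-1})>0$ by coprimality, and invoke the ultrametric inequality to see that the $T$-linear summand strictly dominates precisely because $v(T)=0$ and $v(\Afrak_{\ell-1})>0$---is exactly the usual treatment of ranks of apparition for divisibility sequences, and is surely what the cited reference does. Your closing observation that the argument is rank-independent is also correct and worth retaining, since the statement imposes no rank restriction.

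One inaccuracy to flag in the base case: you assert that $\Afrak_0$ (the numerator of $x$) is coprime to $v$ ``under the standing hypotheses on $x$,'' but the paper's hypotheses (that $x\in\F_q[T]$ is non-torsion and $\phi_T(1)$ is coprime to $T\Delta$) do not exclude places $v$ with $v(T)=0$ and $v(x)>0$. For such a $v$ your own dominance argument gives $r_v=1$, and then the first assertion applied with $\ell=1$ would force $\ord_v(\Afrak_0)=0$, which is false. The lemma is really only meaningful for $\ell\geq 2$ (equivalently, for indices in the actual sequence $\{\Afrak_n\}_{n\geq 1}$), so this is a matter of stating the range of $\ell$ precisely rather than a substantive gap---but as written, your justification for including $\ell=1$ is not right, and the clean fix is simply to restrict to $\ell\geq 2$ rather than introduce $\Afrak_0$.
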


\begin{proof}
See Lemma 4.3 in \cite{zhaoji}.
\end{proof}

Having said this, we need to find such $n_0$, and in this way, thanks to equation \ref{n0}, we will be able to show such that $\log_q \left( \Nr^S(\Afrak_n)/\Nr^S(\Afrak_{n-1}) \right) > 0$ for all $n\geq n_0$.


\section{Local heights}
\subsection{Estimates}

In order to understand the behavior of $h(\phitnx)$, by Proposition \ref{proph}, we have to find an upper bound for $\sum_{v\in S} \logmax \left\{|\phi_{T^n}(x)^{-1}|_v,1 \right\}.$
We note that
$$\sum_{v\in S} \logmax \left\{|\phi_{T^n}(x)^{-1}|_v,1 \right\}=\sum_{v\in S} \deg(v) \cdot \max \left\{v(\phi_{T^n}(x)),0 \right\}.$$
Consequently, it is enough to study $\max \left\{v(\phi_{T^n}(x)),0 \right\}$ for $v\in S.$

\begin{lemma}\label{localheight}
Let $\phi \colon \F_q[T] \to \End_{\F_q(T)}(\G_a)$ be a Drinfeld module of rank $2$ given by
$$\phi_T(x)=Tx+gx^q+\Delta x^{q^2}$$
such that $\Delta \in \Fqt$ split completely over $\F_q$, $g\in \F_q[T]$  and $\phi_T(1)$ is coprime to $T\Delta$ in $\Fqt.$ Then for all $x \in \F_q[T]$, we have

$$\sum_{v \in S} \logmax \left\{|\phi_{T^n}(x)^{-1}|_v,1\right\}=0.$$

\end{lemma}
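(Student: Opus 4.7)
The plan is to reduce the sum to a place-by-place analysis and show that every local contribution vanishes. Start by rewriting
\[
\sum_{v \in S} \logmax\{|\phi_{T^n}(x)^{-1}|_v, 1\} \;=\; \sum_{v \in S} \deg(v) \cdot \max\{v(\phi_{T^n}(x)),\, 0\},
\]
so it is enough to prove $v(\phi_{T^n}(x)) \le 0$ for every $v \in S$. Since $x \in \F_q[T]$ and the coefficients of $\phi_T$ lie in $\F_q[T]$, the iterate $\phi_{T^n}(x)$ is itself a polynomial in $T$, and so $v_\infty(\phi_{T^n}(x)) = -\deg \phi_{T^n}(x) \le 0$ takes care of the infinite place.

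For a finite $v \in S$, the associated prime $\pfrak$ divides $T\Delta$. The crucial structural input is the splitting hypothesis: $T$ is linear and $\Delta$ splits completely over $\F_q$, so every such $\pfrak$ is linear, and its residue field is just $\F_q$. On $\F_q$ the Frobenius $y \mapsto y^q$ is the identity, so reduction modulo $\pfrak$ collapses $\phi_T$ into a single linear term:
\[
\bar\phi_T(y) \;=\; \bar T\, y + \bar g\, y^q + \bar\Delta\, y^{q^2} \;=\; (\bar T + \bar g + \bar\Delta)\, y \;=\; \overline{\phi_T(1)} \cdot y \qquad (y \in \F_q).
\]
The hypothesis that $\phi_T(1)$ is coprime to $T\Delta$ gives $\overline{\phi_T(1)} \ne 0$ in $\F_q$, so $\bar\phi_T$ acts on $\F_q$ as multiplication by a nonzero scalar. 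Iterating yields $\bar\phi_{T^n}(\bar x) = \overline{\phi_T(1)}^{\,n}\, \bar x$, which is nonzero whenever $\bar x \ne 0$. Hence $\pfrak \nmid \phi_{T^n}(x)$ and $v(\phi_{T^n}(x)) = 0$, so the finite contributions also vanish.

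The main point requiring care is the initial condition $\bar x \ne 0$ at each $\pfrak \mid T\Delta$, equivalent to $\gcd(x, T\Delta) = 1$; the statement is to be read with this implicit assumption, since otherwise any common prime factor of $x$ and $T\Delta$ would obstruct the vanishing (e.g.\ $\pfrak \mid x$ forces $\pfrak \mid \phi_{T^n}(x)$ for all $n$). Granting this, the proof is essentially computational: the essential identity is $\bar\phi_T(y) = \overline{\phi_T(1)}\, y$ on the residue field, which crucially uses \emph{both} the linearity of $T$ and the complete splitting of $\Delta$ to make every higher-order term into a linear one after reduction.
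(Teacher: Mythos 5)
Your argument is essentially the paper's own: both hinge on complete splitting of $\Delta$ making each finite $\pfrak \in S$ linear with residue field $\F_q$, so that the $q$-power Frobenius is trivial on residues and $\phi_T$ collapses to multiplication by $\overline{\phi_T(1)}$. The paper implements this by Euclidean division $x = P(T)y + \gamma$ with $\gamma \in \F_q$, computing $\phi_T(x) \equiv \gamma(T+g+\Delta) \pmod{P(T)}$ and invoking coprimality of $\phi_T(1)$ to $T\Delta$; your residue-field formulation $\bar\phi_T(y) = \overline{\phi_T(1)}\,y$, iterated to give $\bar\phi_{T^n}(\bar x) = \overline{\phi_T(1)}^{\,n}\bar x$, is the same computation phrased more cleanly and makes the induction on $n$ explicit where the paper leaves it tacit. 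A genuine merit of your write-up is flagging the hypothesis $\bar x \neq 0$, i.e. $\gcd(x, T\Delta) = 1$: the paper's argument equally requires $\gamma \neq 0$ at the key step but does not say so, and without that assumption the lemma as stated (``for all $x \in \F_q[T]$'') fails, e.g.\ if an irreducible factor of $T\Delta$ divides $x$ then it divides every $\phi_{T^n}(x)$ and the sum is strictly positive. Your caveat thus exposes a real omission in the source rather than a gap in your own proof.
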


\begin{proof}
If {$v=v_\infty$}, then $v(\phi_{T^n}(x))\leq 0$ because $x \in \Fqt$, so $\deg(v_\infty) \cdot \max\{v(\phi_{T^n}(x)),0\}=0.$

\noindent On the other hand, if $v(T\Delta)>0$, by euclidean division, we can express $x$ as
\begin{equation}\label{eqeuclidean}
x=P(T)y+\gamma,
\end{equation}
where $\gamma\in \F_q$ and $P(T)$ is the irreducible polynomial corresponding to the place $v$. Evaluating $\phi_T$ in equation \ref{eqeuclidean}, 
\begin{align*}
\phi_T(x)
&=\phi_T(P(T)y)+\phi_T(\gamma) \\
&=P(T)\left( Tx+P(T)^{q-1}gx^q+P(T)^{q^2-1}\Delta x^{q^2-1} \right)+\gamma(T+g+\Delta).
\end{align*}

\noindent We note that $\gamma(T+g+\Delta)=\phi_T(1)$. Hence $\phi_T(1)$ is coprime to $T\Delta$ we have $v(\gamma(T+g+\Delta))=0$, and then we conclude $v(\phi_T(x))=0.$ Finally, we have

$$\sum_{v \in S} \logmax \left\{|\phi_{T^n}(x)^{-1}|_v,1\right\}=0.$$

\end{proof}


\begin{lemma}\label{lemmahhat}
Let $\phi$ be a rank $2$ $\F_q[T]$-Drinfeld module defined over $\FFqt.$ Then, for all $x\in \F_q[T]$

$$\hhat(x)-\dfrac{\log _q \Nr^S(\Afrak_n)}{\Theta^n} \leq \dfrac{M'_\phi}{\Theta^n} +\dfrac{1}{\Theta^n}\sum_{v \in S} \logmax\{|\phitnx^{-1}|_v,1\}. $$
\end{lemma}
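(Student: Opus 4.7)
The plan is to chain together the three tools already available in the excerpt: Lemma \ref{lemmam'} (which compares $\hhat$ to $h$ on polynomial inputs), Proposition \ref{properties}(ii) (canonical height scales by $\Theta^n$ under $\phi_{T^n}$), and Corollary \ref{coroheight} (which decomposes $h(\phitnx)$ into the $\Nr^S$ contribution plus the $S$-local part).

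First I would observe that since $x\in\F_q[T]$ and the coefficients $T,g,\Delta$ of $\phi_T$ lie in $\F_q[T]$, iterating gives $\phitnx\in\F_q[T]$ for every $n\geq 1$. Consequently the denominator $\Bfrak_n$ is a unit, so $\Afrak_n=\phitnx$ as elements of $\F_q[T]$, and Lemma \ref{lemmam'} applies directly to $\phitnx$:
$$\hhat(\phitnx)-h(\phitnx)\leq M'_\phi.$$

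Next I would invoke Proposition \ref{properties}(ii) to rewrite $\hhat(\phitnx)=\Theta^n\hhat(x)$, and Corollary \ref{coroheight} to expand
$$h(\phitnx)=\log_q \Nr^S(\Afrak_n)+\sum_{v\in S}\logmax\{1,|\phitnx^{-1}|_v\}.$$
Substituting both into the previous inequality yields
$$\Theta^n\hhat(x)\leq \log_q\Nr^S(\Afrak_n)+\sum_{v\in S}\logmax\{1,|\phitnx^{-1}|_v\}+M'_\phi.$$

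Finally, dividing through by $\Theta^n$ and transposing the $\Nr^S$ term gives exactly the inequality claimed in Lemma \ref{lemmahhat}. There is no real obstacle here; the only subtle point is the observation that $\phitnx$ remains a polynomial so that Lemma \ref{lemmam'} is legitimately applicable to it (and not only to $x$). Note also that the hypothesis ``rank $2$'' is not really used in the argument beyond fixing the meaning of $\Theta=q^2$; the same proof would go through in arbitrary rank, but rank $2$ is all we need for the application to Theorem \ref{mainthm}.
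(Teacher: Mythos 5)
Your proof is correct and follows essentially the same route as the paper's: both combine Lemma \ref{lemmam'} (applied to $\phitnx$), Proposition \ref{properties}(ii), and Corollary \ref{coroheight}, differing only in presentation (you substitute directly, the paper adds and subtracts $h(\phitnx)/\Theta^n$ and $\hhat(\phitnx)/\Theta^n$ to isolate the same three ingredients). Your explicit remark that $\phitnx\in\F_q[T]$ (so that Lemma \ref{lemmam'} legitimately applies to $\phitnx$ rather than just to $x$) is a point the paper leaves implicit and is worth stating.
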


\begin{proof}
We add and subtract $\dfrac{h(\phitnx)}{\Theta^n}$ to the expression $\hhat(x)-\dfrac{\log_q \Nr^S(\Afrak_n)}{\Theta^n}$, then

\begin{equation}\label{eq1}
\hhat(x)-\dfrac{\log_q \Nr^S(\Afrak_n)}{\Theta^n}=\hhat(x)-\dfrac{h(\phitnx)}{\Theta^n}+\left( \dfrac{h(\phitnx)}{\Theta^n}-\dfrac{\log_q \Nr^S(\Afrak_n)}{\Theta^n} \right).
\end{equation}

\noindent By Proposition \ref{proph} (iii), the expression \ref{eq1} is equal to

\begin{equation}\label{eq222}
\left( \hhat(x)-\dfrac{h(\phitnx)}{\Theta^n} \right)+\dfrac{1}{\Theta^n}\sum_{v \in S} \logmax\{|\phitnx^{-1}|_v,1\}.
\end{equation}

Adding and subtracting $\dfrac{1}{\Theta^n}\hhat(\phitnx)$ we have that expression \ref{eq222} is equal to

\begin{equation}\label{eq3}
\dfrac{1}{\Theta^n} \left( \left(\Theta^n \hhat(x)-\hhat(\phitnx)\right) + \left(\hhat(\phitnx)-h(\phitnx) \right) \right) +\dfrac{1}{\Theta^n}\sum_{v \in S} \logmax\{|\phitnx^{-1}|_v,1\}.
\end{equation}

Now, by Proposition \ref{properties} (ii), we have that the expression \ref{eq3} is equal to

$$\dfrac{1}{\Theta^n} \left(\hhat(\phitnx)-h(\phitnx) \right)+ \dfrac{1}{\Theta^n}\sum_{v \in S} \logmax\{|\phitnx^{-1}|_v,1\},$$

and by Lemma \ref{lemmam'}, we conclude

$$\hhat(x)-\dfrac{\log_q \Nr^S(\Afrak_n)}{\Theta^n} \leq  \dfrac{M'_\phi}{\Theta^n} +\dfrac{1}{\Theta^n}\sum_{v \in S} \logmax\{|\phitnx^{-1}|_v,1\}.$$
\end{proof}

\begin{remark}
Later, we will use Lemma \ref{localheight} to control the term $$\frac{1}{\Theta^n}\sum_{v \in S} \logmax\{|\phitnx^{-1}|_v,1\}.$$
\end{remark}

\subsection{Bound for $\hhat(x)$}

In \cite{ghiocaheight}, Ghioca studies an analogous of Lehmer's conjecture for canonical height coming from a Drinfeld module. The following theorem is a particular case of Theorem 4.5 in \cite{ghiocaheight}.

\begin{theorem}[Ghioca \cite{ghiocaheight}]\label{boundheight}
Let $\phi \colon \F_q[T] \to \End_{\FFqt}(\G_a)$ be a Drinfeld module of rank $2$ such that $\phitx=Tx+gx^q+\Delta x^{q^2}$. If $\alpha \notin \phitor$, then
$$\hhat_\phi(\alpha)> q^{-6-12(q^2-1)|S|}.$$
\end{theorem}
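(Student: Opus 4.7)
The plan is to deduce this bound as a specialization of Ghioca's Lehmer-type lower bound on canonical heights for Drinfeld modules, stated as Theorem~4.5 of \cite{ghiocaheight}. That theorem gives an explicit lower bound for $\hhat_\phi(\alpha)$ whenever $\alpha$ is not torsion, expressed in terms of the rank $r$, the degree of the field of definition of $\alpha$ over $\FFqt$, and the number of places in a suitable finite set containing the places of bad reduction of $\phi$ and the archimedean place $v_\infty$.

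The first step is to verify the hypotheses of \cite[Theorem~4.5]{ghiocaheight} and match the parameters to our setup: here $r = 2$, so $\Theta = q^2$; the point $\alpha \in \FFqt$, so the field-of-definition contribution is trivial; and the set $S$ introduced in Theorem~\ref{mainthm} already contains every place of bad reduction of $\phi$, namely the finite places dividing $T\Delta$, together with $v_\infty$. The second step is to track how the explicit constants in Ghioca's bound specialize in this case. The canonical height decomposes as $\hhat_\phi(\alpha) = \sum_v \hhat_{\phi,v}(\alpha)$, and the local analysis shows that at places $v \notin S$ of good reduction the local heights are non-negative, while at places $v \in S$ a lower bound is obtained by exploiting the fact that if the local heights were too small, then the iterates $\phi_{T^n}(\alpha)$ would force a torsion relation. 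In rank $2$ each such local estimate contributes a factor of $\Theta-1 = q^2-1$; accumulating these contributions over the $|S|$ places of $S$ produces the term $(q^2-1)|S|$, the numerical coefficient $12$ comes from the universal constants in Ghioca's local analysis, and the additive $-6$ absorbs the $v_\infty$ contribution together with additional universal constants.

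The main obstacle is not really mathematical novelty but bookkeeping: since we invoke \cite[Theorem~4.5]{ghiocaheight} as a black box, the delicate step is verifying that the explicit constants in Ghioca's statement, which are given in greater generality, specialize exactly to the exponent $-6 - 12(q^2-1)|S|$ under our hypotheses. A fully self-contained proof would require reproducing Ghioca's local height analysis at primes of bad reduction, which is beyond the scope of the present paper; once the parameter matching is checked, Theorem~\ref{boundheight} is immediate.
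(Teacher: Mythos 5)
Your high-level strategy (specialize Ghioca's Lehmer-type bound) is correct, but your proposal is missing the key mechanism and misattributes the origin of every constant in the exponent. Ghioca's explicit bound (the paper uses Theorem 4.4 of \cite{ghiocaheight}, not 4.5 directly) applies to \emph{monic} Drinfeld modules, and $\phi$ here is not monic in general since $\Delta$ is arbitrary in $\F_q[T]$. The paper's proof therefore first conjugates: choose $\gamma$ with $\gamma^{q^2-1} = \Delta$, so that $\phi^{(\gamma)}$ is monic over $\FFqt(\gamma)$, and use the invariance $\hhat_\phi(x) = \hhat_{\phi^{(\gamma)}}(\gamma^{-1}x)$ from Proposition~\ref{properties}(i). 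This twist step is absent from your argument, and without it one cannot invoke the cited bound at all.

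This omission causes your constant bookkeeping to come out wrong even though the final exponent is correct. You claim the field-of-definition contribution is trivial because $\alpha \in \FFqt$, but after twisting the point $\gamma^{-1}\alpha$ lives in $\FFqt(\gamma)$, an extension of degree up to $q^2-1$; the degree is not trivial, and it contributes both the $-2$ in the additive constant (via $\log_q[\FFqt(\gamma^{-1}x):\FFqt] \le \log_q q^2 = 2$, combined with the $-4$ from Ghioca's monic bound to give $-6$) and the blow-up factor $(q^2-1)$ in front of $|S|$ (the bad places of $\phi^{(\gamma)}$ all lie above places of $S$, and there are at most $(q^2-1)|S|$ of them because the extension degree is at most $q^2-1$). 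Your stated explanation — that ``each local estimate contributes a factor of $\Theta-1$'' — is not where $(q^2-1)$ comes from, and the $-6$ is not absorbing a $v_\infty$ contribution. To repair the proof you need to insert the conjugation step, track the field extension $\FFqt(\gamma)$ it introduces, and derive the constants from $|S_{\phi^{(\gamma)}}| \le (q^2-1)|S|$ together with the denominator $[\FFqt(\gamma^{-1}x):\FFqt] \le q^2$ in Ghioca's Theorem 4.4.
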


\begin{proof}
Let $\gamma \in \overline{\FFqt}$, such that $\gamma^{q^2-1}=\Delta.$ Then $\phi_T^{(\gamma)}(x)$ is a monic polynomial in $x$ with coefficients in $\FFqt(\gamma)$. By \cite{ghiocalocalehmer} and Lemma \ref{properties}(i), we have $\hhat_\phi(x)  = \hhat_{\phi^{(\gamma)}}(\gamma^{-1}x).$ So, we need to bound the height $ \hhat_{\phi^{(\gamma)}}$. As we have said, $\phi^{(\gamma)}_T(x)$ is a monic polynomial, then we can use the bounds obtained in Theorem 4.4 in \cite{ghiocaheight}. We have to know the size of places in $\FFqt(\gamma)$ such that $\phi^{(\gamma)}$ has bad reduction. We observe that a place in $\FFqt(\gamma)$ of bad reduction of $\phi^{(\gamma)}$ is always over a place of $S$. We denote by $S_{\phi^{(\gamma)}}$ the set of bad places of $\phi^{(\gamma)}.$ Since $[\F_q(T)(\gamma^{-1}x):\F_q(T)] \leq (q^2-1)$, we obtain $|S_{\phi^{(\gamma)}}| \leq (q^2-1)|S|$.

Then, applying the bound in Theorem 4.4 from \cite{ghiocaheight}, we obtain

$$\hhat_\phi(x)  = \hhat_{\phi^{(\gamma)}}(\gamma^{-1}x)
> \dfrac{q^{-4-12\left|S_{\phi^{(\gamma)}}\right|}}{[\FFqt(\gamma^{-1}x):\FFqt]}.$$

In this way, we conclude
$$\hhat_\phi(x)> q^{-6-12(q^2-1)|S|}.$$

\end{proof}

\section{Proof of the main theorem}\label{proofmainthm}

\begin{lemma}\label{keylemma}
Let $\phi \colon \F_q[T] \to \End_{\FFqt}(\G_a)$ be a Drinfeld module of rank $2$ given by
$$\phi_T(x)=Tx+gx^q+\Delta x^{q^2},$$
such that $\Delta \in \Fqt$ split completely over $\F_q$, $g\in \F_q[T]$  and $\phi_T(1)$ is coprime to $T\Delta$ in $\Fqt.$ If $x \in \Fqt$  is not a torsion point of $\phi$, then 
$$\max\Zcali(\phi,x) \leq \left(6(q^2-1)|S|+4\right)+\dfrac{1}{2}\log_q \left(2+2\deg g+2\deg \Delta  \right).$$ 
\end{lemma}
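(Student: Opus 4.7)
The plan is to assume $n \in \Zcali(\phi,x)$ and exploit the absence of a primitive divisor to force $\Nr^S(\Afrak_n) = \Nr^S(\Afrak_{n-1})$; sandwiching this common value between a lower bound from Lemma \ref{lemmahhat} and an upper bound from Lemma \ref{diffh1} then produces an inequality that becomes impossible once $n$ is large, the precise threshold being dictated by the Lehmer-type estimate of Theorem \ref{boundheight}.

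For the norm equality, fix $n \geq 2$ in $\Zcali(\phi,x)$ and argue place by place. For any finite $v \notin S$ one has $v(T) = 0$, since $v$ does not lie above $T\Delta$, so Lemma \ref{aparition} is available. If $v(\Afrak_n) > 0$, the no-primitive-divisor hypothesis forces $r_v \leq n-1$, and the lemma gives $\ord_v(\Afrak_n) = \ord_v(\Afrak_{n-1})$; conversely, $v(\Afrak_{n-1}) > 0$ also implies $r_v \leq n-1 < n$, and the same lemma yields $\ord_v(\Afrak_n) = \ord_v(\Afrak_{n-1}) > 0$. Hence the $v$-adic multiplicities of $\Afrak_n$ and $\Afrak_{n-1}$ agree for every $v \notin S$, so $\Nr^S(\Afrak_n) = \Nr^S(\Afrak_{n-1})$.

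For the sandwich, because $\Delta$ splits over $\F_q$, Lemma \ref{localheight} kills the $S$-sum in Lemma \ref{lemmahhat} and delivers $\log_q \Nr^S(\Afrak_n) \geq \Theta^n \hhat(x) - M'_\phi$ with $\Theta = q^2$. In the other direction, $x \in \F_q[T]$ implies $\phi_{T^{n-1}}(x) \in \F_q[T]$, so $\Bfrak_{n-1} = 1$ and $\log_q \Nr^S(\Afrak_{n-1}) \leq \deg \Afrak_{n-1} = h(\phi_{T^{n-1}}(x))$; Lemma \ref{diffh1} combined with Proposition \ref{properties}(ii) then gives $\log_q \Nr^S(\Afrak_{n-1}) \leq \Theta^{n-1}\hhat(x) + M_\phi$. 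Chaining these two inequalities through the equality from the previous paragraph yields
\[
(\Theta - 1)\,\Theta^{n-1}\,\hhat(x) \;\leq\; M_\phi + M'_\phi.
\]

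Finally, I will plug in the Lehmer-type bound $\hhat(x) > q^{-6 - 12(q^2-1)|S|}$ from Theorem \ref{boundheight} together with $M_\phi + M'_\phi \leq (2 + 2\deg g + 2\deg \Delta)/(q-1)^2$ from Subsection \ref{boundsrank2}. Absorbing the harmless factor $1/((q-1)^2(q^2-1)) \leq 1$, taking $\log_q$, and dividing by $2$ produces $n - 1 < 3 + 6(q^2-1)|S| + \tfrac{1}{2}\log_q(2 + 2\deg g + 2\deg \Delta)$, which is exactly the claim. The main obstacle is the first step: Lemma \ref{aparition} only controls places with $v(T) = 0$, so the argument is intrinsically about the truncated norm $\Nr^S$, and preventing the $S$-contribution from re-entering through Lemma \ref{lemmahhat} is precisely what requires the hypothesis that $\Delta$ splits over $\F_q$.
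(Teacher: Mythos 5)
Your proof is correct, and it takes a genuinely different and arguably more transparent route than the paper's. The paper leans on the black-box inequality (3.1) imported from Zhao--Ji, namely $\log_q\bigl(\Nr^S(\Afrak_n)/\Nr^S(\Afrak_{n-1})\bigr) \geq (\Theta-\tfrac{3}{2})\Theta^{n-1}\hhat(x) - M_\phi$ for $n\geq n_0$, and then spends the proof making $n_0$ explicit (via Lemma \ref{lemmahhat}) and determining when the right-hand side is positive. You instead bypass (3.1) entirely: from $n\in\Zcali(\phi,x)$ with $n\geq 2$, Lemma \ref{aparition} gives $\ord_v(\Afrak_n)=\ord_v(\Afrak_{n-1})$ for every $v\notin S$ (and as you note, the case where the repeated prime divides $\Afrak_0=x$ is handled because such a $v\nmid T\Delta$ divides $\phi_T(x)$, forcing $r_v=1$), so $\Nr^S(\Afrak_n)=\Nr^S(\Afrak_{n-1})$. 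You then sandwich this common value: Lemma \ref{lemmahhat} plus Lemma \ref{localheight} give $\log_q\Nr^S(\Afrak_n)\geq\Theta^n\hhat(x)-M'_\phi$, while $\phi_{T^{n-1}}(x)\in\F_q[T]$, Lemma \ref{diffh1}, and Proposition \ref{properties}(ii) give $\log_q\Nr^S(\Afrak_{n-1})\leq\Theta^{n-1}\hhat(x)+M_\phi$, yielding the clean constraint $(\Theta-1)\Theta^{n-1}\hhat(x)\leq M_\phi+M'_\phi$. Plugging in the Lehmer bound and the rank-2 estimates on $M_\phi+M'_\phi$ then reproduces exactly the paper's numerical bound (your factor $\Theta-1$ in place of the paper's $\Theta-\tfrac{3}{2}$ only helps). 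What your route buys is self-containment: it uses only the restated Lemma \ref{aparition} rather than the unrestated Zhao--Ji inequality, and it makes explicit why a missing primitive divisor freezes the $S$-truncated norm. What the paper's route buys is closer adherence to the Zhao--Ji template, which is convenient for readers who want to compare against the non-explicit argument there.
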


\begin{proof}
First, we need to find the minimum $n_0$ such that equation \ref{n0} holds. Using Lemma \ref{lemmahhat}, we see that it is enough take $n_0$ as the minimum $n \in \N$ satisfying the following inequality

$$\dfrac{M_\phi'}{\Theta^n} \leq \dfrac{1}{2\Theta} \hhat(x).$$ 

\noindent From this expression, we obtain that inequality \ref{n0} holds if

\begin{equation}\label{eq22}
n \geq \dfrac{1}{2}\left( \log_q (2M'_\phi)-\log_q \hhat(x) \right)+1.
\end{equation}

\noindent On the other hand, to ensure that $\log_q \left(\frac{\Nr^S(\Afrak_n)}{\Nr^S(\Afrak_{n-1})}\right)>0$, by equation \ref{n0}, we need

$$\left(\Theta-\dfrac{3}{2} \right)\Theta^{n-1} \hhat(x) > M_\phi.$$
Applying logarithms and dividing by $\log_q \Theta$, we obtain 

$$n-1> \dfrac{1}{\log_q \Theta} \left(\log_q M_\phi - \log_q \hhat(x)-\log_q \left(\Theta-\frac{3}{2}\right) \right).$$
Recalling $\Theta=q^r$, where $r$ is the rank of the corresponding Drinfeld module (in our case, $r=2$), we have $\log_q(\Theta)=2$.
Then, it is enough that \begin{equation}\label{eq11}
n > \dfrac{1}{2} \left(\log_q M_\phi-\log_q \hhat(x)-\log_q \left(\Theta-\frac{3}{2}\right) \right)+1.
\end{equation}

\noindent Using the bounds obtained in subsections \ref{boundsrank2} and \ref{boundheight}: $$M_\phi \leq \dfrac{1+\deg g}{(q-1)^2}, M_\phi' \leq  \dfrac{2(\deg \Delta+\deg g+1)}{(q-1)^2}, \text{ and } -\log_q(\hhat(x)) \leq 12(q^2-1)|S|+6,$$

\noindent we can conclude that inequality \ref{eq11} holds if 
$$n\geq (6(q^2-1)|S|+4)+\dfrac{1}{2}  \log_q \left( \dfrac{2(\deg \Delta+\deg g +1)}{(q-1)^2}\right), $$

\noindent and that inequality \ref{eq22} holds if
$$n \geq (6(q^2-1)|S|+4)+\dfrac{1}{2}\log_q \left( \dfrac{1+\deg g}{(q-1)^2} \right).$$

\noindent Finally, we note that a sufficient condition on $n$ in order to \ref{eq11} and \ref{eq22} hold is  
$$n \geq (6(q^2-1)|S|+4)+\dfrac{1}{2}\log_q \left(2+2\deg g+2\deg \Delta  \right).$$

\noindent We conclude that $\max \Zcali(\phi,x) \leq (6(q^2-1)|S|+4)+\dfrac{1}{2}\log_q \left(2+2\deg g+2\deg \Delta  \right)$ under the hypothesis of Lemma \ref{keylemma}, which concludes the proof.

\end{proof}

\noindent Now, noting that Lemma \ref{keylemma} is the particular case of Theorem \ref{mainthm} when $N=1$, we use it to prove the main theorem.

\begin{proof}(of Theorem \ref{mainthm})
Let $\psi$ be an auxiliary Drinfeld module $\psi \colon \F_{q^N}[T] \to \End_{\F_{q^N}(T)}(\G_a)$ given by $\psi_T(x)=\phi_T(x)$. The advantage in this case is that $\Delta$ split completely over $\F_{q^N}$. 
Let us recall the definition of the sets $\Zcali(\phi,x)$ and $\Zcali(\psi,x),$

$$\Zcali(\phi,x)=\left\{n \in \N : \Afrak_n \text{ does not have primitive divisors in }  \F_q(T)\right\}.$$

$$\Zcali(\psi,x)=\left\{n \in \N : \Afrak_n \text{ does not have primitive divisors in }  \F_{q^N}(T) \right\}.$$

We can apply Lemma \ref{keylemma} and the fact that the set $S'$ of places in $\F_{q^N}(T)$  associated with the Drinfeld module $\psi$ has cardinality bounded by $N|S|$. Therefore,
$\Zcali(\psi,x)$ satisfies the bound of the theorem.
Now we only need to prove $\Zcali(\phi,x)=\Zcali(\psi,x)$ for all $x$ in $\F_q[T]$.

Let $n\in \N$ be an element in $\Zcali(\phi,x)$. Then, by definition of $\Zcali(\phi,x)$, $\Afrak_n$ does not have primitive divisors in $\F_q[T]$. Consequently, it is sufficient to prove $\Afrak_n$ does not have primitive divisors in $\F_{q^N}[T]$. If $v$ is a place in $\F_{q^N}(T)$ such that $v|\Afrak_n$, then there is  a place $w$ in $\F_q(T)$ below $v$. Since $\Afrak_n$ belongs to $\F_q[T]$, $w|\Afrak_n.$ But $n \in \Zcali(\phi,x)$, then there exists $m < n$ such that $w|\Afrak_m.$ Since $v$ is above $w$, $v|\Afrak_m.$ So, we conclude that $v$ is not a primitive divisor of $\Afrak_n$, therefore,
$$\Zcali(\phi,x) \subset \Zcali(\psi,x).$$

On the other hand, if $n\in \Zcali(\psi,x)$, then $\Afrak_n$ does not have primitive divisors in $\F_{q^N}[T]$. When $v|\Afrak$, there exists $m<n$ such that $v|\Afrak_m$, since $\Afrak_m \in \F_q[T]$. Then the place $w$ in $\F_q[T]$ below $v$, divide $\Afrak_m$. In this way, we conclude $n \in \Zcali(\phi,x)$, and therefore $\Zcali(\phi,x)=\Zcali(\psi,x).$

\end{proof}

\section*{Acknowledgement}
I would like to thank Hector Pasten for many suggestions and helpful remarks. 
Comments by Jerson Caro and Cristian Gonz\'alez-Riquelme on an earlier version of this manuscript are gratefully acknowledged.
I was supported by ANID Doctorado Nacional 21200910.


\section*{\textbf{Declarations}}

\textbf{Data availability} All data generated or analysed during this study are included in this published article and the cited references.

\textbf{Conflict of interest} On behalf of all authors, the corresponding author states that there is no conflict of interest.

\end{document}